\newtheorem{prop}{Proposition}
\newtheorem{lemma}[prop]{Lemma}
\newtheorem{theorem}{Theorem}
\newtheorem*{theorem*}{Theorem}
\newtheorem*{cor*}{Corollary}
\newtheorem*{prop*}{Proposition}
\newtheorem{cor}[prop]{Corollary}
\theoremstyle{definition}
\newtheorem{remark}[prop]{Remark}
\newcommand{\CC}{\mathbb{C}}
\renewcommand{\P}{\mathbb{P}}
\newcommand{\C}{\mathfrak{C}}
\newcommand{\I}{\mathcal{I}}
\renewcommand{\O}{\mathcal{O}}
\newcommand{\Q}{\mathbb{Q}}
\newcommand{\F}{\mathcal{F}}
\renewcommand{\c}{\mathsf{c}}
\newcommand{\U}{\mathcal{U}}
\renewcommand{\S}[1]{S^{[#1]}}
\newcommand{\SC}[1]{(S/C)^{[#1]}}
\newcommand{\PCoo}[1]{(\P_{C,L}/C_\infty)^{[#1]}}
\newcommand{\PR}[1]{(\P_{C,L})^{[#1],\sim}}
\newcommand{\Po}[1]{(\P_{C,L,0})^{[#1]}}
\newcommand{\T}[1]{\mathcal{T}^{[#1]}}
\newcommand{\Pic}{\text{Pic}}
\title[Intersection numbers on the relative Hilbert schemes]{Intersection numbers on the relative Hilbert schemes of points on surfaces}
\author{Amin Gholampour, Artan Sheshmani}
\date{\today}                                           % Activate to display a given date or no date
\begin{document}
\maketitle
\begin{abstract}
We study certain top intersection products on the Hilbert scheme of points on a nonsingular surface relative to an effective smooth divisor. We find a formula relating these numbers to the corresponding intersection numbers on the non-relative Hilbert schemes. In particular, we obtain a relative version of the explicit formula found by Carlsson-Okounkov for the Euler class of the twisted tangent bundle of the Hilbert schemes. 
\end{abstract}
\section{Introduction}
\subsection{Overview}
Hilbert schemes have long been regarded as one of the important objects of study in algebraic geometry. In particular, in the case of the Hilbert scheme of points on surfaces, there have been many exciting achievements, both in mathematics and physics (see \cite{N99} for a survey). One of the interesting aspects of the study of these objects is their enumerative geometry. Our main motivation for studying enumerative problems arising over the Hilbert scheme of points on surfaces is their relation to the Donaldson-Thomas invariants of 2-dimensional sheaves inside threefolds.

Let $S$ be a nonsingular projective surface over $\CC$. In 1990, G\"{o}ttsche \cite{G90} computed the generating series associated to the integral of the Euler class of the tangent bundle (Euler characteristic) of the Hilbert scheme of $n$ points, denoted by $S^{[n]}$, and the result appeared to be closely related to the Dedekind eta modular form:

\begin{theorem*}(G\"{o}ttsche)
$$1+ \sum_{n>0 }q^n \int_{S^{[n]} }e(\S{n})=\Xi(q)^{-e(S)}$$
where $$\Xi(q)=\prod_{n>0}(1-q^n).$$
\end{theorem*}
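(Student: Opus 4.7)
The first observation is that, since $\S{n}$ is smooth and projective of dimension $2n$, the Gauss--Bonnet theorem identifies $\int_{\S{n}}e(T_{\S{n}})$ with the topological Euler characteristic $\chi(\S{n})$. The theorem therefore reduces to the generating-series identity
$$1+\sum_{n>0}\chi(\S{n})\,q^n \;=\;\prod_{n>0}(1-q^n)^{-e(S)}.$$

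The plan is to compute $\chi(\S{n})$ by stratifying via the Hilbert--Chow morphism $\pi_n\colon\S{n}\to S^{(n)}$ and exploiting the additivity and multiplicativity of the topological Euler characteristic. For each partition $\lambda=(1^{a_1}2^{a_2}\cdots)$ of $n$, let $S^{(n)}_\lambda$ denote the locally closed stratum of $0$-cycles whose multiplicities form $\lambda$. Over a cycle $\sum_j m_j[x_j]\in S^{(n)}_\lambda$, the fiber of $\pi_n$ is the product $\prod_j \hilb^{m_j}(\CC^2)_0$ of punctual Hilbert schemes at the support points. By Brian\c con's irreducibility theorem together with trivializations on coordinate charts of $S$, the restriction of $\pi_n$ to each stratum is a Zariski-locally trivial fibration (after an \'etale cover that labels the support). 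The punctual Hilbert scheme $\hilb^m(\CC^2)_0$ carries a $(\CC^*)^2$-action whose fixed locus consists of monomial ideals indexed by partitions of $m$, and Bia\l{}ynicki-Birula cell decomposition therefore gives $\chi(\hilb^m(\CC^2)_0)=p(m)$. Combining multiplicativity along fibers with additivity along the stratification yields
$$\chi(\S{n}) \;=\; \sum_{\lambda\vdash n}\chi(S^{(n)}_\lambda)\prod_{j}p(m_j).$$

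The final step is a combinatorial repackaging of this formula into the Dedekind-like product. Writing $\lambda$ as the multiset data $(a_1,a_2,\ldots)$ with $n=\sum_k k a_k$, interpreting $S^{(n)}_\lambda$ as the open locus in $\prod_k S^{(a_k)}$ on which all $\sum_k a_k$ support points are pairwise distinct, and invoking Macdonald's formula $\sum_k\chi(S^{(k)})\,q^k=(1-q)^{-e(S)}$, the double sum $\sum_n q^n\sum_{\lambda\vdash n}$ factors as an Euler product over $k\geq 1$ in which the $k$-th factor contributes $(1-q^k)^{-e(S)}$. The main obstacle is the careful justification of multiplicativity of $\chi$ along $\pi_n$ on each stratum, which rests on Brian\c con's theorem and on producing local trivializations for the punctual family; the combinatorial step is then formal. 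As an alternative that sidesteps the fibration analysis, G\"ottsche's original argument counts points of $\S{n}(\mathbb{F}_q)$ using Ellingsrud--Str{\o}mme's cell decomposition over finite fields and passes to Betti/Euler characteristics via the Weil conjectures, arriving at the same product.
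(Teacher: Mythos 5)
The paper does not prove this statement: it is quoted from G\"ottsche's 1990 paper as background for the Carlsson--Okounkov formula, so there is no in-paper argument to compare yours against. On its own terms, your outline is the standard proof of the Euler-characteristic version of G\"ottsche's theorem and is essentially sound: Gauss--Bonnet identifies the integral with $\chi(S^{[n]})$; the Hilbert--Chow stratification, Brian\c{c}on irreducibility, and the Ellingsrud--Str{\o}mme/Bia\l{}ynicki-Birula count $\chi(\hilb^m(\CC^2)_0)=p(m)$ give $\chi(S^{[n]})=\sum_{\lambda\vdash n}\chi(S^{(n)}_\lambda)\prod_j p(m_j)$; and the product formula follows by a (standard) combinatorial repackaging. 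Your fallback via point counts over finite fields and the Weil conjectures is also a complete route.

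The one step you should not wave at is the final factorization. As written --- ``invoking Macdonald's formula, the double sum factors as an Euler product whose $k$-th factor is $(1-q^k)^{-e(S)}$'' --- the bookkeeping does not close up: if you group by the multiplicity partition and factor over $k$ using the symmetric products $S^{(a_k)}$ while keeping the fiber weights, the $k$-th factor you actually produce is $\sum_a \chi(S^{(a)})\,p(k)^a q^{ka}=(1-p(k)q^k)^{-e(S)}$, and $\prod_k(1-p(k)q^k)^{-c}$ is wrong already at $q^2$ (it gives $(c^2+5c)/2$ versus the correct $\chi(S^{[2]})=(c^2+3c)/2$ for $c=e(S)$); the removed diagonals and the weights $p(m_j)$ do not distribute over $k$ for free. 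The correct repackaging is the configuration-space (``power structure'') identity: the multi-color binomial formula $\sum_{(a_k)}\chi\bigl(\mathrm{Conf}_{(a_k)}(S)\bigr)\prod_k t_k^{a_k}=\bigl(1+\sum_k t_k\bigr)^{\chi(S)}$ applied with $t_k=p(k)q^k$, followed by $1+\sum_{k\ge 1}p(k)q^k=\prod_{k\ge1}(1-q^k)^{-1}$ and the ordinary power-series identity $\bigl(\prod_k(1-q^k)^{-1}\bigr)^{c}=\prod_k(1-q^k)^{-c}$. This is elementary and equivalent in content to Macdonald's formula, but it is the actual combinatorial heart of the argument and should be stated as such rather than attributed to a term-by-term Euler-product factorization.
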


More recently,  Carlsson and Okounkov  \cite{CO12} found an analog of G\"{o}tt\-sche's formula for the integral of the Euler class of the tangent bundles of $\S{n}$ \emph{twisted} by a line bundle on $S$ as we will explain below. These Euler classes arise naturally in computing the Donaldson-Thomas invariants of 2-dimensional sheaves in threefolds containing $S$ as a divisor.

 Given $S$ and a line bundle $M$ on $S$, the twisted tangent bundle $\T{n}_S(M)$ is a bundle defined on $\S{n}$, whose fiber over the ideal sheaf $I \in \S{n}$ is naturally identified with \begin{equation} \label{fiber} \sum_{i=0}^2(-1)^i(H^i(M)-Ext^i(I,I\otimes M)).\end{equation} We will give a precise definition of $\T{n}_S(M)$ in Section \ref{application}.

\begin{theorem*}  (Carlsson-Okounkov) \begin{equation} \label{eqco} 1+ \sum_{n>0 }q^n \int_{S^{[n]} }e(\T{n}_S(M))=\Xi(q)^{-\delta_S(M)},\end{equation} where $$\delta_S(M)=\int_S e(TS\otimes M).$$ \qed\end{theorem*}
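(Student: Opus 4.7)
My plan is to adapt the Heisenberg/vertex operator technique. Start with the Nakajima-Grojnowski action of a Heisenberg algebra on $\F := \bigoplus_{n\geq 0} H^*(\S{n},\CC)$, modelled on $H^*(S,\CC)$: for each $\gamma\in H^*(S,\CC)$ and $k>0$ there are creation/annihilation operators $\alpha_{-k}(\gamma)$ and $\alpha_k(\gamma)$ satisfying $[\alpha_k(\gamma),\alpha_{-\ell}(\delta)] = k\,\delta_{k\ell}(\gamma,\delta)_S$, and a cyclic vacuum $|0\rangle\in H^*(\S{0},\CC)=\CC$ annihilated by $\alpha_k$ for $k>0$.

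First, I would realise the integrals $\int_{\S{n}} e(\T{n}_S(M))$ as matrix coefficients of a vertex operator $W(M,q)$ on $\F[[q]]$. Denote by $\I_1,\I_2$ the universal ideal sheaves pulled back to $\S{n}\times\S{m}\times S$ and by $\pi_{12}$ the projection to $\S{n}\times\S{m}$. The virtual bundle
$$\E^M_{n,m} \;:=\; H^*(S,M)\otimes\O_{\S{n}\times\S{m}} \;-\; R\pi_{12*}\,R\Ext(\I_1,\I_2\otimes p_S^*M)$$
has, by \eqref{fiber}, diagonal restriction $\E^M_{n,n}|_\Delta = \T{n}_S(M)$. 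The Euler classes $e(\E^M_{n,m})$ define a $q$-weighted correspondence operator $W(M,q):\F[[q]]\to\F[[q]]$, and the left-hand side of \eqref{eqco} is then recovered as the vacuum-to-vacuum expectation $\langle 0|W(M,q)|0\rangle$.

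The heart of the argument is the \emph{vertex operator factorization}
$$W(M,q) \;=\; \Gamma_+(M,q)\,\Gamma_-(M,q),$$
where $\Gamma_+$ (resp.\ $\Gamma_-$) is an exponential of annihilation (resp.\ creation) Nakajima operators with explicit cohomology insertions on $S$ built from $M$. This identity is proved by resolving $R\Ext(\I_1,\I_2\otimes p_S^*M)$ in terms of tautological classes supported on Nakajima's correspondence varieties and matching the resulting Euler classes against the normal-ordered power-series expansion of $\Gamma_+\Gamma_-$ via Grothendieck-Riemann-Roch. This step is the main obstacle: it is an analogue of the boson-fermion correspondence for the Hilbert scheme and requires a delicate interplay between intersection theory on Hilbert schemes and Fock space representation theory.

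Granted the factorization, the remainder is a standard normal-ordering computation. The Heisenberg relations imply that $[\Gamma_+,\Gamma_-]$ is scalar-valued, so by BCH one has $\Gamma_+\Gamma_- = \Gamma_-\Gamma_+\,e^{[\Gamma_+,\Gamma_-]}$; combined with $\Gamma_+|0\rangle = |0\rangle$ and $\langle 0|\Gamma_- = \langle 0|$, this yields $\langle 0|W(M,q)|0\rangle = e^{[\Gamma_+,\Gamma_-]}$. A direct evaluation of the commutator using the Heisenberg bracket produces the product $\prod_{k\geq 1}(1-q^k)^{-\delta_S(M)}$, with the exponent $\delta_S(M) = \int_S e(TS\otimes M)$ emerging as the Poincar\'e self-pairing on $S$ of the cohomology insertions in $\Gamma_\pm$, completing the proof of \eqref{eqco}.
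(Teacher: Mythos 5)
First, a point of reference: the paper does not prove this statement at all --- it is quoted from Carlsson--Okounkov \cite{CO12} and used as a black box (hence the \verb|\qed| attached directly to the statement). So your proposal has to be measured against the proof in that reference, whose overall architecture you have correctly identified: encode the Euler classes of the Ext virtual bundles $\E^M_{n,m}$ as a correspondence operator on the Nakajima--Grojnowski Fock space, factor it as a normal-ordered exponential of Heisenberg operators, and finish with a Fock-space computation. As a proof, however, what you have written has two genuine problems.

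The first is that essentially all of the mathematical content is concentrated in the step you defer: the factorization $W(M,q)=\Gamma_+(M,q)\,\Gamma_-(M,q)$ with explicit cohomological insertions. You yourself call it ``the main obstacle'' and offer only a one-sentence indication (resolve $R\Ext(\I_1,\I_2\otimes p_S^*M)$ by tautological classes and apply Grothendieck--Riemann--Roch). That identity \emph{is} the theorem of \cite{CO12}; without it the proposal proves nothing, so this is an outline rather than a proof. The second problem is a concrete misstep in how the left-hand side of \eqref{eqco} is extracted from $W$. Since $\T{n}_S(M)$ is the restriction of $\E^M_{n,n}$ to the diagonal of $\S{n}\times\S{n}$, one has $\int_{\S{n}}e(\T{n}_S(M))=\int_{\S{n}\times\S{n}}e(\E^M_{n,n})\cdot[\Delta]$, and expanding $[\Delta]$ by the K\"unneth decomposition shows that this is the graded \emph{supertrace} of $W$ on $H^*(\S{n})$, not a vacuum-to-vacuum matrix element: the pairing $\langle 0|W(M,q)|0\rangle$ only sees the $(n,m)=(0,0)$ component of the correspondence and is identically $1$. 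Consequently the closing computation must be the standard evaluation of $\mathrm{tr}_{\F}\,q^{\deg}\,\Gamma_-\Gamma_+$, in which each Heisenberg mode $\alpha_{-k}$ contributes the geometric series responsible for the factors $(1-q^k)^{-1}$ and the exponent $\delta_S(M)=\int_S e(TS\otimes M)$ emerges from the pairing of the insertions --- not the BCH/commutator manipulation you describe, which as written computes a vacuum expectation value and would not produce the eta-type product by this mechanism. The second issue is repairable by replacing ``vacuum expectation'' with ``trace'' throughout; the first is not repairable without importing the full argument of \cite{CO12}.
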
 
 Note that when $M=\O_S$ the theorem above specializes to G\"{o}ttsche's formula. One of the key facts used in this paper is the following feature of the theorem above:

\begin{remark} \label{quasi} If $S$ is quasi-projective equipped with a $\CC^*$-action so that $S^{\CC^*}$ is complete, and $M\in \Pic(S)^{\CC^*}$ (the equivariant Picard group of $S$), then the integrals in the theorem can be defined by the equivariant residues, and in this sense, the formula in the theorem still holds \cite {CO12}.
\end{remark}

Among the other important top intersection products on $\S{n}$ is the integral of the top Segre class of the tautological bundles associated to a line bundle on $S$. These integrals were studied in \cite{L99}.
\subsection{Main results} \label{sec:main}

The main object of  study in this paper is the relative Hilbert scheme of points. Let $C$ be a smooth effective divisor on $S$. Li and Wu \cite{LW15} introduced the notion of a \emph{stable relative} ideal sheaf.
$I\in \S{n}$ is said to be relative to $C$ if the natural map $$I\otimes \O_C\to \O_S\otimes \O_C$$ is injective (see also \cite{MNOP06}). Relativity is an open condition on $\S{n}$. Li and Wu constructed a proper relative Hilbert scheme, denoted by $\SC{n}$, by considering the equivalence classes of the stable relative ideal sheaves on the $k$-step semistable models $S[k]$ for $0\le k\le n$. Two relative ideal sheaves $I_1$ and $I_2$ on $S[k]$ are equivalent if the quotients $\O_{S[k]}/I_i$ differ by an automorphism of $S[k]$ covering the identity on $S[0]=S$. The stability of a relative ideal sheaf means that it has finitely many auto equivalences as described above.  $\SC{n}$ is a smooth proper Deligne-Mumford stack of dimension $2n$. 

We study some specific top intersection products over $\SC{n}$.
 We express the intersection numbers over  $\SC{n}$ in terms of the intersection numbers over the standard (non-relative) Hilbert scheme of points. 
 
 One of the key ingredients of the proof of the main result in this paper is the degeneration technique. Let $S  \rightsquigarrow S_0:= S_1\cup_C S_2$ be a good degeneration of the surface $S$, and let \begin{equation}\label{expanded}\mathfrak{S}\to \C\end{equation} be the corresponding universal family over the \emph{stack of expanded degenerations} (see \cite{L01, L02, LW15}).   Li and Wu [LW15] constructed the Hilbert scheme of $n$ points  on this family that we denote by $$\mathfrak{H}^{[n]}(\mathfrak{S}\to \C).$$ The Hilbert scheme $\mathfrak{H}^{[n]}(\mathfrak{S}\to \C)$ is a proper Deigne-Mumford stack over the 1-dimensional Artin stack $\C$ parameterizing the expanded degenerations \eqref{expanded}. A non-special fiber of $\mathfrak{H}^{[n]}(\mathfrak{S}\to \C)$ over $\C$ is isomorphic to $\S{n}$, whereas the special fiber of $\mathfrak{H}^{[n]}(\mathfrak{S}\to \C)$, denoted by  $S^{[n]}_0$, can be written as the (non-disjoint) union \begin{equation} \label{central}S^{[n]}_0=\bigcup_{\tiny \begin{array}{c}n_1, n_2 \ge 0\\ n=n_1+n_2 \end{array}} (S_1/C)^{[n_1]}\times (S_2/C)^{[n_2]}. \end{equation}

Our main theorem (Theorem \ref{main1}) is stated and proven in a general setting that can be applied in various situations. As applications of Theorem \ref{main1}, we will deduce the extensions of Carlsson-Okounkov's formula and some other known results to the set up of the relative Hilbert schemes. For this, we work with an assignment $A$ that takes a pair $(S,C)$ as above, a line bundle on $S$, and a nonnegative integer $n$. The output of the  assignment $A$ is an element of the $K$-group of the relative Hilbert scheme $\SC{n}$. The assignment $A$ is required to be \emph{well-behaved} under the good degenerations of $S$ as above, and also to \emph{respect} the $\CC^*$-equivariant structures. More precisely,  for any $n=0, 1, 2,\dots,$ suppose that  $$A_{S/C}^{[n]}:\Pic(S) \to K(\SC{n})$$ is an assignment satisfying the following properties (if $C=0$ we drop ``$/C$'' from the notation): %\footnote{i.e. the definition of $A_{-}^{[n]}$ does not depend on the choice of a particular pair $(S,C)$.} and satisfies the following properties (if $C=0$ we drop ``$/C$'' from the notation):
\begin{enumerate}[(i)]
\item If $n=0$ then we take $\SC{0}=\text{Spec }\CC$, and $A_{S/C}^{[0]}(M)=1$.
\item Given a good degeneration $S  \rightsquigarrow S_1\cup_C S_2$, then there exist the degenerations of the line bundle $M$ and the K-group element $A_S^{[n]}(M)$ $$  \Pic(S)\ni M \rightsquigarrow  M_i \in \Pic(S_i) \quad i=1, 2,$$ $$A_S^{[n]}(M)\rightsquigarrow \mathcal{K} \in K(S^{[n]}_0),$$  such that for any $$n_1, n_2\ge 0 \quad \text{and} \quad n=n_1+n_2$$ the restriction of  $\mathcal{K}$ to the corresponding component of the special fiber \eqref{central} of $\mathfrak{H}^{[n]}(\mathfrak{S}\to \C)$ is given by $$ \mathcal{K}|_{(S_1/C)^{[n_1]}\times (S_2/C)^{[n_2]}} \cong A_{S_1/C}^{[n_1]}(M_1)\boxplus A_{S_2/C}^{[n_2]}(M_2).$$ %where $\iota_{n_1,n_2}$ is the inclusion of the component $(S_1/C)^{[n_1]}\times (S_2/C)^{[n_2]}$ into the special fiber \eqref{central} of $\mathfrak{H}^{[n]}(\mathfrak{S}\to \C).$
\item If $S$ is equipped with a $\CC^*$-action that leaves $C$ invariant, and $M$ admits a $\CC^*$-equivariant lift $M^*\in \Pic(S)^{\CC^*}$, then $$A_{S/C}^{[n]}(M^*)\in K(\SC{n})^{\CC^*}.$$ Here $\Pic(-)^{\CC^*}$ and $K(-)^{\CC^*}$ indicate the equivariant Picard and $K$-groups.
\end{enumerate}

For the virtual bundle $u-v$,  the Euler class $e(u-v)$ is defined to be the top degree graded piece of $c(u)/c(v)$, where $c(-)$ is the total Chern class. We are interested in studying the following generating series:
\begin{equation}\label{K-gen}
Z_A(S/C,M)=1+ \sum_{n>0 }q^n \int_{\SC{n}}e(A_{S/C}^{[n]}(M)).
\end{equation}

\begin{remark} \label{quasi} Suppose that $S$ is quasi-projective, but it is equipped with a $\CC^*$-action that leaves $C$ invariant, and so that $S^{\CC^*}$ is complete. If $M^{*}$ is any  $\CC^*$-equivariant lift of $M$, and if $A_{S/C}^{[n]}(M^*)$ for $n=1,2,\dots, $ are the corresponding $\CC^*$-equivariant classes, then, $Z_A(S/C,M^*) \in \mathbb{Q}[t, t^{-1}]$ can be defined by means of the equivariant residues. 
\end{remark}
If $C=0$ we drop ``$/C$'' from the notation of the generating series. For example, thinking of the twisted tangent bundle $\T{n}_{S/C}(M)$ with fibers given by \eqref{fiber} (see Section \ref{application} for more details), as an assignment $\Pic(S) \to K(\SC{n})$, then Carlsson-Okounkov's formula can be rewritten as \begin{equation}Z_{\mathcal{T}}(S,M)=\Xi(q)^{-\delta_S(M)}.\end{equation}

The main result of the paper expresses $Z_A(S/C,M)$ in terms of the generating series of the same intersection numbers over the non-relative Hilbert schemes:
\begin{theorem}\label{main1}Let $\pi:\mathcal{N}\to C$ be the total space of the normal bundle of $C$ in $S$ equipped with the natural fiberwise $\CC^*$-action. For any $M\in \Pic(S)$, there exists a lift of this action to $\pi^*(M|_C)$,  giving the equivariant line bundle $M^*\in Pic(\mathcal{N})^{\CC^*}$, such that  $Z_A(\mathcal{N} ,M^*)$ has a non-equivariant limit and 
$$Z_A(S/C,M)=\frac{Z_A(S,M)}{ Z_A(\mathcal{N} ,M^*)}.$$
\end{theorem}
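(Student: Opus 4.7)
The proof strategy is to apply property (ii) to the degeneration of $S$ to its normal cone along $C$, and to identify the resulting ``bubble'' factor with $Z_A(\mathcal{N},M^*)$ via $\CC^*$-equivariant localization.

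Consider the blow-up of $S\times\mathbb{A}^1$ along $C\times\{0\}$; this yields a good degeneration $S\rightsquigarrow S\cup_C P$ in which $P:=\P(\mathcal{N}\oplus\O_C)$ meets $S$ along the zero section $C_0\subset P$ and whose complement $P\setminus C_0$ is naturally identified with the total space $\mathcal{N}$. Choose compatible extensions $M_1=M$ on $S$ and $M_2=\pi_P^*(M|_C)$ on $P$, where $\pi_P:P\to C$ is the projection. By (ii), $A_S^{[n]}(M)$ degenerates to a class $\mathcal{K}\in K(S_0^{[n]})$ whose restriction to each component $(S/C)^{[n_1]}\times(P/C)^{[n_2]}$ of \eqref{central} factors as $A_{S/C}^{[n_1]}(M)\boxplus A_{P/C}^{[n_2]}(M_2)$. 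Deformation invariance of the top intersection numbers on $\mathfrak{H}^{[n]}(\mathfrak{S}\to\C)$, combined with the product structure of these restrictions on the components of the central fiber, yields
\begin{equation*}
Z_A(S,M) \;=\; Z_A(S/C,M)\cdot Z_A(P/C,M_2).
\end{equation*}

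It remains to show $Z_A(P/C,M_2)=Z_A(\mathcal{N},M^*)$ for an appropriate equivariant lift $M^*$. The natural fiberwise $\CC^*$-action on $P$ fixes $C$ pointwise and lifts to an action on $(P/C)^{[n]}$; since $M_2$ is pulled back from the $\CC^*$-fixed base $C$, it carries a canonical $\CC^*$-equivariant structure, whose restriction to $\mathcal{N}\cong P\setminus C_0$ provides the desired lift $M^*$ of $\pi^*(M|_C)$. By (iii), $A_{P/C}^{[n]}(M_2)$ is then $\CC^*$-equivariant, so the integral $\int_{(P/C)^{[n]}}e(A_{P/C}^{[n]}(M_2))$ may be computed by $\CC^*$-equivariant localization on the proper Deligne--Mumford stack $(P/C)^{[n]}$. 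An analysis on the Li--Wu expanded targets $P[k]$ shows that the relativity condition at $C_0$ forces any $\CC^*$-fixed stable relative ideal sheaf to be supported inside $P\setminus C_0\cong\mathcal{N}$ (with any bubble expansion occurring in the fiber direction), and that the equivariant normal bundles of these fixed loci coincide with those appearing in the equivariant residue defining $Z_A(\mathcal{N},M^*)$. Hence
\begin{equation*}
Z_A(\mathcal{N},M^*) \;=\; Z_A(P/C,M_2)\;\in\;\mathbb{Q}[[q]],
\end{equation*}
so in particular the left-hand side has a non-equivariant limit. Dividing the product formula by $Z_A(\mathcal{N},M^*)$ yields Theorem~\ref{main1}.

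The main obstacle is the $\CC^*$-equivariant identification in the second step: one must enumerate the $\CC^*$-fixed configurations on the stacky Li--Wu expansions of $(P/C)^{[n]}$ and match them, together with their virtual normal bundle contributions, to the $\CC^*$-fixed locus of $\mathcal{N}^{[n]}$. This requires a combinatorial analysis of bubble configurations under the torus action, together with a careful use of the compatibility in (iii) when restricting to fixed loci.
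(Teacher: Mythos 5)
Your first step --- deformation to the normal cone and the resulting factorization $Z_A(S,M)=Z_A(S/C,M)\cdot Z_A(P/C_\infty,M_2)$ via property (ii) and deformation invariance of the degree of the $0$-cycle --- is exactly the paper's Proposition \ref{relsurf}, and it is fine. Your second step is also the same strategy as the paper's Proposition \ref{localization} (apply $\CC^*$-localization to the relative Hilbert scheme of the bubble $\P(\mathcal{N}\oplus\O_C)$ and identify the answer with the equivariant series on the open part $\mathcal{N}$), but here you leave the decisive point unproved. The $\CC^*$-fixed locus of $(P/C_\infty)^{[n]}$ is \emph{not} contained in the Hilbert scheme of the open part $\mathcal{N}$: it is a disjoint union of products $\PR{n_1}\times(\mathcal{N}^{[n_2]})^{\CC^*}$ with $n_1+n_2=n$, where the first factor is the rubber Hilbert scheme of the expanded bubbles over the relative divisor. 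So the identity $Z_A(P/C_\infty,M_2)=Z_A(\mathcal{N},M^*)$ is exactly the statement that all the rubber contributions with $n_1>0$ drop out, and your proposal only gestures at this via an unspecified ``combinatorial analysis of bubble configurations,'' which you yourself flag as the main obstacle. That is the actual content of the proposition, not a routine verification.

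The paper kills the rubber terms by a short weight-plus-dimension argument rather than by combinatorics: one chooses the equivariant lift $M^\infty$ of $M_2$ so that its fibers over the relative divisor $C_\infty$ carry the \emph{trivial} $\CC^*$-weight. Then the descended class $A^{[n_1],\sim}_{\P_{C,L}}(M^\infty)$ on the rubber carries no equivariant weights, so $e(A^{[n_1],\sim})$ is an honest cohomology class of degree $2n_1$ on the $(2n_1-1)$-dimensional stack $\PR{n_1}$, and every integral in the rubber series \eqref{rubber} vanishes for dimension reasons; hence $Z_A(\P_{C,L},M^\infty)^{\sim}=1$ and only the $n_1=0$ stratum survives, giving $Z_A(P/C_\infty,M_2)=Z_A(\mathcal{N},M^\infty)$. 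Note that this argument is sensitive to the choice of lift: with the opposite normalization $M^0$ the rubber series is genuinely nontrivial (Corollary \ref{cor:rubber}), so no purely geometric matching of fixed loci of the kind you describe can work independently of the linearization. To complete your proof you would need to add this choice of equivariant structure and the dimension count; as written, the central step is a gap.
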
 

We prove Theorem \ref{main1} in Section \ref{Theory}. In the course of proof, we obtain a few other formulas for the intersection numbers on the relative Hilbert schemes which are interesting on their own. Most importantly, we prove an explicit formula for the generating series of a specific top intersection product over the \emph{rubber} Hilbert scheme (see Corollary \ref{cor:rubber}). 
 
 The main application of Theorem \ref{main1} is the version of Carlsson-Okounkov's formula for the relative Hilbert schemes:
 \begin{theorem}\label{mains}
Let $S$ be a nonsingular projective surface and $C_1,\dots C_k \subset S$ be disjoint nonsingular curves. If $M$ is a line bundle on $S$ then
$$Z_{\mathcal{T}}(S/C_1\cup \cdots \cup C_k,M)=\Xi(q)^{-\int_S e(T_S[-C_1-\dots -C_k]\otimes M)},$$ where $T_S[-C_1-\dots -C_k]$ is the sheaf of tangent fields with logarithmic zeros along the divisor $\sum C_i$. 
\end{theorem}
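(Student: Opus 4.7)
The plan is to apply Theorem \ref{main1} with the assignment $A = \T{}$ and then reduce the two factors on the right-hand side using the (equivariant) Carlsson-Okounkov formula. The preliminary step is to verify hypotheses (i)--(iii) for the twisted tangent bundle $\T{n}_{S/C}(M)$ defined in Section \ref{application}. Condition (i) is immediate. Condition (iii) follows because $\T{n}_{S/C}(M)$ is built from the relative Ext complex of the universal ideal sheaf twisted by $p_1^*M$, a construction compatible with any $\CC^*$-structure on $S$ and $M$. Condition (ii) is the main point: one must show that this virtual class degenerates cleanly along \eqref{expanded} and restricts on each component $(S_1/C)^{[n_1]}\times (S_2/C)^{[n_2]}$ of the central fiber \eqref{central} as the external direct sum of the corresponding classes. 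I would establish this by base change for the relative Ext functor along $\mathfrak{H}^{[n]}(\mathfrak{S}\to\C)\to\C$, combined with a Mayer--Vietoris-style decomposition of the universal ideal over the reducible special fiber.

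Once Theorem \ref{main1} applies, for a single smooth curve $C$ it yields
\[Z_{\mathcal{T}}(S/C,M) \;=\; \frac{\Xi(q)^{-\delta_S(M)}}{\Xi(q)^{-\delta_\mathcal{N}(M^*)}},\]
by Carlsson-Okounkov in the numerator and Remark \ref{quasi} in the denominator, where $\delta_\mathcal{N}(M^*)$ is interpreted as an equivariant residue integral on the total space $\mathcal{N}$ of $N_{C/S}$. To evaluate it, I would $\CC^*$-localize at the zero section $C\subset \mathcal{N}$: writing $L = N_{C/S}$ we have $T_\mathcal{N}|_C = T_C \oplus L$ with weights $0$ and $1$, and the equivariant Euler class of the normal bundle $N_{C/\mathcal{N}}$ equals $c_1(L)+t$. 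The lift singled out by Theorem \ref{main1} is the weight-zero lift of $\pi^*(M|_C)$; with this choice the vanishing of products of degree-$2$ classes on the curve $C$ kills the apparent $t^{-1}$ polar term in the localized integrand, and the result is the $t$-independent expression
\[\delta_\mathcal{N}(M^*) \;=\; \int_C\bigl(c_1(T_C) + c_1(M|_C)\bigr).\]

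The case $k=1$ then reduces to a direct Chern-class computation using the exact sequence $0\to T_S[-C]\to T_S \to i_*N_{C/S}\to 0$: expanding $c(T_S[-C]) = c(T_S)/(1+[C])$ and integrating gives $\int_S e(T_S[-C]\otimes M) = \delta_S(M) - \int_C(c_1(T_C)+c_1(M|_C))$, exactly matching the difference of exponents. For disjoint $C_1,\dots,C_k$ the degeneration argument for Theorem \ref{main1} applies with $C = \bigsqcup C_i$, so $\mathcal{N}$ is the disjoint union of normal bundles and $Z_{\mathcal{T}}(\mathcal{N},M^*)$ factors as a product of local contributions, each of the form just computed. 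The identity $c(T_S[-C_1-\dots-C_k]) = c(T_S)/\prod_i(1+[C_i])$ (valid because $C_i\cdot C_j = 0$ in $H^*(S)$) shows that the combined exponent is precisely $-\int_S e(T_S[-C_1-\dots-C_k]\otimes M)$, giving the theorem. The main obstacle is the rigorous verification of condition (ii) for $\T{}$: one must track how the relative Ext complex of the universal ideal on $\mathfrak{H}^{[n]}(\mathfrak{S}\to\C)$ restricts to the components of the central fiber, a derived-functor calculation on a total space with mild singularities along the gluing locus.
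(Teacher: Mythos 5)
Your proposal is correct and follows essentially the same route as the paper: verify properties (i)--(iii) for $\mathcal{T}$ (the paper's Lemma \ref{satis} carries out your ``Mayer--Vietoris'' step via the triangle coming from $0\to\O_{S_0}\to\O_{S_1}\oplus\O_{S_2}\to\O_C\to 0$, with the relativity condition producing the $p_1^*M_C$ correction term), apply Theorem \ref{main1}, evaluate the normal-bundle factor by the equivariant Carlsson--Okounkov formula localized at the zero section (your residue computation reproduces the paper's $\delta_{\P_{C,L,0}}(M^\infty)$ with $m_f=0$, giving $e(C)+M\cdot C$), and conclude with the Chern-class identity for $T_S[-C]$. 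The only part left as a plan rather than a proof is the verification of property (ii), but your sketch identifies the correct mechanism.
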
 This theorem is proven in Section \ref{application}. Theorem \ref{mains}  is the 2-dimensional analog of the degree 0 relative MNOP conjecture \cite[Conjecture 1R]{MNOP06} proven in \cite{LP09}. 

In Section \ref{other}, we discuss further direct applications of Theorem \ref{main1}. We consider two more important top intersection products on the relative Hilbert schemes. One is the integral of the top Segre class of the tautological bundles, and the other is the self intersection number $D^{[n]}\cdot D^{[n]}$ for the nonsingular curve $D \subset S$. We prove that these numbers can be expressed in terms of the intrinsic invariants of the pair $(S,C)$ (Theorem \ref{thm:intrinsic}). 
One of our motivations for this work is the study of the Donaldson-Thomas invariants of 2-dimensional sheaves inside threefolds. S-duality predicts that these invariants have modular properties \cite{VW94, DM11, GST14}. As an illustration, in Section \ref{DT}, we give an example of the most basic case i.e. sheaves supported on hyperplane sections of $\P^3$. We use Theorem \ref{mains} to give a formula for the generating functions of these DT invariants and establish their modularity.
\section*{Aknowledgement}
We thank Richard Thomas for discussions and providing valuable comments. We would  like to thank Harvard University and the Journal of Differential Geometry for supporting our short visit during the JDG conference, where some of the relevant discussions on this project took place.

A. G. was partially supported by NSF grant DMS-1406788.

\section{Proof of Theorem \ref{main1}}\label{Theory}

We first compute the left hand side of Theorem \ref{main1} in the case where $S$ is the projectivization of a split rank 2 vector bundle over a curve, and $C$ is the  $\infty$-section. This is done by localization (Proposition \ref{localization}). We then use degeneration to the normal cone to prove the theorem for a general $(S,C)$ (Proposition \ref{relsurf}).  

Let $C$ be a nonsingular genus $g$ curve and $L$ a line bundle on $C$. Define \begin{equation}\label{pcl}\P_{C,L}:=\P(\O\oplus L)\xrightarrow{\pi} C.\end{equation}

We denote by $C_0=\P(\O)$ and $C_\infty=\P(L)$, the $0$- and $\infty$-sections of $\P_{C,L}$ respectively. Given $p\in C$, let $f=[\pi^{-1}(p)]$ denote the numerical class of the fiber.

It is easily seen that $\CC^*$ acts on $\P_{C,L}$ by scaling the fibers. Suppose that the weight of the action of $\CC^*$ on the normal bundles of $C_0$ and $C_\infty$ are respectively $t$ and $-t$. Given $M \in \Pic(\P_{C,L})$, we fix a lift of the $\CC^*$-action to $M$ by requiring that the fibers over $C_\infty$ have the trivial weight. We denote the resulting $\CC^*$-equivariant line bundle by $M^\infty$.

Moreover, in the following arguments, we will be using yet another geometry of Hilbert schemes, that is called the rubber Hilbert scheme. The $\CC^*$-action on $\P_{C,L}$ induces an action on the relative Hilbert scheme $$(\P_{C,L}/C_0\cup C_\infty)^{[n]}.$$ Let $\U$ be the open subset with finite $\CC^*$-stabilizers and no degeneration over $C_\infty$. The rubber Hilbert scheme is then defined to be the Deligne-Mumford quotient stack $$\PR{n}=[\U/\CC^*].$$  Note that the rubber Hilbert scheme above carries no $\CC^*$-action. Let $$K(\PR{n})^{\CC^*}$$ be the $\CC^*$-equivariant $K$-group with respect to the trivial action on the rubber Hilbert scheme.

   Now given any $M^* \in \Pic(\P_{C,L})^{\CC^*}$, by the property (iii) of the assignment $A$, we can define $$A_{\P_{C,L}}^{[n],\sim}(M^*)\in K(\PR{n})^{\CC^*}$$ to be the descent of $A_{\P_{C,L}/C_0\cup C_\infty}^{[n]}(M^*)\in K((\P_{C,L}/C_0\cup C_\infty)^{[n]})^{\CC^*}$. Define the generating series 
   \begin{align}\label{rubber} &Z_A(\P_{C,L},M^*)^{\sim} =1+ \sum_{n>0 }q^{n}\int_{\PR{n}} \frac{e(A_{\P_{C,L}}^{[n],\sim}(M^*))}{-t-\Psi_0} \in \mathbb{Q}\llbracket q\rrbracket [t,t^{-1}],\end{align} 
   where  $\Psi_0$ is the first Chern class of the cotangent line bundle of the rubber Hilbert scheme, corresponding to the relative divisor $C_0$.

   The following Proposition aims at proving an analog of Theorem \ref{main1} for the spacial case $(S,C)=(\P_{C,L},C_\infty)$:

\begin{prop} \label{localization} 
$$Z_A(\P_{C,L}/C_\infty,M)=Z_A(\P_{C,L,0},M^\infty)$$ where $\P_{C,L,0}=\P_{C,L} \setminus C_\infty.$ In particular, the right hand side has a non-equivariant limit.
\end{prop}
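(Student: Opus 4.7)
The plan is to apply $\CC^*$-equivariant virtual localization to the proper Deligne--Mumford stack $(\P_{C,L}/C_\infty)^{[n]}$ with respect to the fiberwise scaling action inherited from $\P_{C,L}$, and then to match the resulting fixed-locus contribution with the equivariant residue definition of $Z_A(\P_{C,L,0},M^\infty)$.

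First I would use properness of $(\P_{C,L}/C_\infty)^{[n]}$ together with property (iii) of $A$ to promote the integrand to its $\CC^*$-equivariant version with respect to the lift $M^\infty$; the equivariant pushforward to a point is independent of $t$ and agrees with the non-equivariant integral, so $Z_A(\P_{C,L}/C_\infty,M)$ may be computed by localization. Next I would analyze the fixed locus of $\CC^*$ on $(\P_{C,L}/C_\infty)^{[n]}$: the fixed locus of $\P_{C,L}$ itself is $C_0\cup C_\infty$, relative stability forbids zero-dimensional embedded components on $C_\infty$ in the unexpanded layer, and the $\CC^*$-action on any nontrivial expansion over $C_\infty$ is absorbed into the quotient defining the rubber space. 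This suggests the decomposition
$$\bigsqcup_{n_1+n_2=n}(\P_{C,L,0})^{[n_1],\CC^*}\times\PR{n_2}.$$
Property (ii), applied to the deformation of $\P_{C,L}$ to the normal cone of $C_\infty$, should split the restriction of $A^{[n]}_{\P_{C,L}/C_\infty}(M^\infty)$ along this product as $A^{[n_1]}_{\P_{C,L,0}}(M^\infty)\boxplus A^{[n_2],\sim}_{\P_{C,L}}(M^\infty)$, and the virtual normal bundle splits correspondingly: its factor along $(\P_{C,L,0})^{[n_1],\CC^*}$ is exactly the denominator appearing in the equivariant residue definition of $Z_A(\P_{C,L,0},M^\infty)$, while the rubber factor contributes $-t-\Psi_0$, matching the denominator built into the definition of $Z_A(\P_{C,L},M^\infty)^\sim$.

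Summing over $n_1,n_2\ge 0$, virtual localization should yield the product formula
$$Z_A(\P_{C,L}/C_\infty,M)=Z_A(\P_{C,L,0},M^\infty)\cdot Z_A(\P_{C,L},M^\infty)^\sim.$$
The hardest step is then to show $Z_A(\P_{C,L},M^\infty)^\sim=1$, which is precisely where the normalization of $M^\infty$ to have trivial weight along $C_\infty$ becomes essential. I expect the vanishing of every $n_2>0$ term to follow from an auxiliary localization on the rubber and a weight count: since $M^\infty$ has trivial $\CC^*$-weight at $C_\infty$, the equivariant integrand on $\PR{n_2}$ is degree-deficient in $t$ along every nontrivial expansion, forcing the corresponding integral to vanish in $\mathbb{Q}\llbracket q\rrbracket[t,t^{-1}]$. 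Once this rubber triviality is in hand, the proposition follows, and as a byproduct $Z_A(\P_{C,L,0},M^\infty)$ must admit a non-equivariant limit because the left-hand side does.
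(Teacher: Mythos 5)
Your overall strategy coincides with the paper's: localize on $(\P_{C,L}/C_\infty)^{[n]}$ with respect to the fiberwise action, identify the fixed locus as $\bigsqcup_{n_1+n_2=n}\PR{n_1}\times(\Po{n_2})^{\CC^*}$, obtain the product formula $Z_A(\P_{C,L}/C_\infty,M)=Z_A(\P_{C,L},M^\infty)^\sim\cdot Z_A(\P_{C,L,0},M^\infty)$, and reduce everything to showing that the rubber factor equals $1$. Up to that reduction your argument is the paper's argument.

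The step you yourself flag as hardest is the one place where your mechanism does not work as stated. ``Degree-deficiency in $t$'' cannot by itself force the rubber integrals to vanish: by definition \eqref{rubber} the rubber series takes values in $\mathbb{Q}\llbracket q\rrbracket[t,t^{-1}]$, so an answer consisting only of negative powers of $t$ is perfectly admissible, and no auxiliary localization on the rubber is available (the rubber carries no $\CC^*$-action). The correct --- and much simpler --- reason is a cohomological dimension count. Because $M^\infty$ is normalized to have trivial weight along $C_\infty$, the descended class $A^{[n],\sim}_{\P_{C,L}}(M^\infty)$ carries no $\CC^*$-weights at all, so its Euler class is an honest (non-equivariant) cohomology class of degree $2n$; since $\dim\PR{n}=2n-1$, every integral appearing in \eqref{rubber} vanishes for dimension reasons (multiplying by powers of $\Psi_0$ from the expansion of $(-t-\Psi_0)^{-1}$ only raises the degree further). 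With that replacement for your weight-count step, your proof becomes the paper's proof.
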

\begin{proof}
The fixed locus of the $\CC^*$-action on $\PCoo{n}$ is the disjoint union over all products of the form (see \cite[Section 4.3]{MNOP06})
$$\PR{n_1} \times (\Po{n_2})^{\CC^*}$$ where $n_1+n_2=n$.
The first factor is the rubber Hilbert scheme of points as described above, and the second factor is the $\CC^*$-fixed point set of the Hilbert scheme of points on $\P_{C,L,0}$, which is the total space of the line bundle $L^\vee$ over $C$.

Applying Atiyah-Bott localization formula, we get 
$$Z_A(\P_{C,L}/C_\infty,M)=Z_A(\P_{C,L},M^\infty)^{\sim} \cdot Z_A(\P_{C,L,0},M^\infty).$$
Note that here, the $K$-group class $A_{\P_{C,L}}^{[n],\sim}(M^\infty)$ carries no $\CC^*$-weights by the choice of the lift of the action to $M^\infty$. Now observe that since $$\dim \PR{n}=2n-1,$$ then all the integrals on the right hand side of \eqref{rubber} vanish for  dimension reasons and hence, we get $$Z_A(\P_{C,L},M^\infty)^{\sim}=1,$$ and this finishes the proof.
\end{proof}

Given $M \in \Pic(\P_{C,L})$, we now fix a lift of the $\CC^*$-action to $M$ by requiring that the fibers over $C_0$ have the trivial weight. We denote the resulting $\CC^*$-equivariant line bundle by $M^0$.
\begin{cor} \label{cor:rubber}
$$Z_A(\P_{C,L},M^0)^{\sim}=\frac{Z_A(\P_{C,L,0},M^\infty)}{Z_A(\P_{C,L,0},M^0)}$$
\end{cor}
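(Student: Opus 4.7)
The plan is to rerun the Atiyah-Bott localization of Proposition \ref{localization} using the alternate equivariant lift $M^0$ in place of $M^\infty$, and then compare the output with what Proposition \ref{localization} has already computed.

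Concretely, the $\CC^*$-fixed locus of $\PCoo{n}$ and its decomposition $\bigsqcup_{n_1+n_2=n}\PR{n_1}\times(\Po{n_2})^{\CC^*}$ depend only on the geometry, as does the splitting of the virtual normal bundle into a rubber piece (yielding the $1/(-t-\Psi_0)$ factor) and a $\P_{C,L,0}$ piece. Substituting $M^0$ for $M^\infty$ in the localization therefore produces
$$Z_A(\P_{C,L}/C_\infty,M)=Z_A(\P_{C,L},M^0)^{\sim}\cdot Z_A(\P_{C,L,0},M^0),$$
with the two factors on the right defined via \eqref{rubber} and \eqref{K-gen} using the new lift. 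The left-hand side is purely non-equivariant and so is independent of the chosen lift; Proposition \ref{localization} has already identified it as $Z_A(\P_{C,L,0},M^\infty)$. Dividing through by $Z_A(\P_{C,L,0},M^0)$ then delivers the claimed ratio.

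The one mild subtlety is that the rubber factor $Z_A(\P_{C,L},M^0)^{\sim}$ no longer vanishes in the way $Z_A(\P_{C,L},M^\infty)^{\sim}$ did in Proposition \ref{localization}: with the lift $M^0$ the descended class $A_{\P_{C,L}}^{[n],\sim}(M^0)$ carries nontrivial $\CC^*$-weights, so its Euler class picks up positive powers of $t$ and the dimension-counting argument that forced the $M^\infty$ series to collapse to $1$ no longer applies. Verifying that the localized rubber integrals for the lift $M^0$ assemble \emph{exactly} into the generating series \eqref{rubber} — rather than some reweighted variant — is the main bookkeeping point, and it is immediate once the equivariant descent of $A(M^0)$ to $\PR{n}$ is identified via property (iii) of the assignment $A$. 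No new analytic input beyond what was used in Proposition \ref{localization} is required.
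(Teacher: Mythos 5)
Your proposal is correct and is essentially the paper's own argument: rerun the Atiyah--Bott localization of Proposition \ref{localization} with the lift $M^0$ to obtain $Z_A(\P_{C,L}/C_\infty,M)=Z_A(\P_{C,L},M^0)^{\sim}\cdot Z_A(\P_{C,L,0},M^0)$, then substitute the identification of the left-hand side from Proposition \ref{localization} and divide. Your remark about why the rubber factor no longer collapses to $1$ for the lift $M^0$ is also consistent with the paper's reasoning.
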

\begin{proof}
Applying Atiyah-Bott localization formula we get 
$$Z_A(\P_{C,L}/C_\infty,M)=Z_A(\P_{C,L},M^0)^{\sim} \cdot Z_A(\P_{C,L,0},M^0).$$ The result now follows from Proposition \ref{localization}.
\end{proof}
Now we use Proposition \ref{localization} and the degeneration technique to complete the proof of Theorem \ref{main1}: 

\begin{prop} \label{relsurf}
Let $(S,C)$ be a pair of a nonsingular surface and a nonsingular divisor, and let $$\P:=\P(\O_C \oplus \O_C(-C))\xrightarrow{\pi} C.$$ Given $M\in \Pic(S)$, let $M_\P=\pi^*(M|_C)$. Then
$$Z_A(S/C,M)=\frac{Z_A(S,M)}{ Z_A(\P/C_\infty,M_\P)}.$$
\end{prop}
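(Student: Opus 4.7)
The strategy is to deform $S$ to the normal cone of $C$ and then use property (ii) of the assignment $A$.

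Let $\mathfrak{S}\to\P^1$ be the blowup of $S\times\P^1$ along $C\times\{0\}$, i.e.\ the degeneration of $S$ to the normal cone of $C$. The generic fiber is $S$, and the special fiber is $S\cup_C\P$, where $\P=\P(N_{C/S}\oplus\O_C)\cong\P(\O_C\oplus\O_C(-C))$ is the exceptional divisor and the gluing identifies $C\subset S$ with $C_\infty\subset\P$. The line bundle $M$ extends to $\mathfrak{S}$ as the pullback of $p_1^{*}M$ under $\mathfrak{S}\to S\times\P^1$; on the central fiber it restricts to $M$ on $S$ and to $\pi^{*}(M|_C)=M_\P$ on $\P$, giving the degeneration $M\rightsquigarrow(M,M_\P)$ required by property (ii). This property then provides a class $\mathcal{K}\in K(S_0^{[n]})$ degenerating from $A_S^{[n]}(M)$, whose restriction to each component of \eqref{central} factors as $A_{S/C}^{[n_1]}(M)\boxplus A_{\P/C_\infty}^{[n_2]}(M_\P)$.

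Next, Li-Wu's relative Hilbert scheme $\mathfrak{H}^{[n]}(\mathfrak{S}\to\C)\to\C$ is proper, so pushing forward the top-degree class $e(A_S^{[n]}(M))$ along it gives a locally constant function on $\C$, and its integral on the generic fiber $S^{[n]}$ equals its integral on the special fiber $S_0^{[n]}$. On $S_0^{[n]}$ only the top-dimensional components of \eqref{central} contribute (the mutual intersection strata have dimension strictly less than $2n$), and applying $e(u\boxplus v)=e(u)\cdot e(v)$ together with Fubini yields
$$\int_{S^{[n]}}e(A_S^{[n]}(M))=\sum_{n_1+n_2=n}\int_{(S/C)^{[n_1]}}e(A_{S/C}^{[n_1]}(M))\cdot\int_{(\P/C_\infty)^{[n_2]}}e(A_{\P/C_\infty}^{[n_2]}(M_\P)).$$
Multiplying by $q^n$ and summing over $n\ge 0$ packages this identity as
$$Z_A(S,M)=Z_A(S/C,M)\cdot Z_A(\P/C_\infty,M_\P).$$
By property (i) the factor $Z_A(\P/C_\infty,M_\P)$ has constant term $1$ and is therefore invertible in $\mathbb{Q}\llbracket q\rrbracket$, so dividing gives the desired identity.

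The hard part will be justifying the degeneration step rigorously: verifying that the pushforward really is locally constant on the Artin stack $\C$ despite the reducible and stacky special fiber, and that the integral over $S_0^{[n]}$ correctly aggregates contributions from the components of \eqref{central} without overcounting their mutual overlaps. In the Li-Wu framework both points reduce to the fact that the intersection loci have dimension strictly less than $2n=\dim S^{[n]}$ and hence contribute zero to any integral of a class in $H^{4n}$; however, setting this up precisely requires the theory of expanded degenerations and attention to the stack structure of $\C$.
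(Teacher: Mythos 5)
Your proposal is correct and follows essentially the same route as the paper: degeneration to the normal cone of $C$, the Li--Wu degeneration formula for the Hilbert scheme $\mathfrak{H}^{[n]}(\mathfrak{S}\to\C)$ combined with properties (i) and (ii) of $A$, and then the generating-series factorization $Z_A(S,M)=Z_A(S/C,M)\cdot Z_A(\P/C_\infty,M_\P)$. The technical points you flag (local constancy over $\C$ and the treatment of the overlapping components of the special fiber) are exactly what the paper delegates to the Li--Wu framework via its footnote.
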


\begin{proof}
Consider the degeneration of $S$ to the normal cone of $C$ in $S$ \begin{equation} \label{degen} S \rightsquigarrow S \cup_{C=C_\infty} \P.\end{equation}  The line bundle $M$ degenerates to $$M \rightsquigarrow M \text{ on } S \quad \text{ and } \quad M_\P  \text{ on } \P.$$  

As mentioned in Section \ref{sec:main}, we can associate to the degeneration \eqref{degen}, the stack of expanded degenerations $\C$ together with the universal family $\mathfrak{S}\to \C$, and the Hilbert scheme $\mathfrak{H}^{[n]}(\mathfrak{S}\to \C)$.
The invariance of the degree of the 0-cycles in the fibers of $\mathfrak{H}^{[n]}(\mathfrak{S}\to \C)$ over $\C$ gives 
$$ \int_{\S{n}}e(A_{S}^{[n]}(M))=\sum_{n_1=0}^n \int_{\SC{n_1}}e(A_{S/C}^{[n_1]}(M))\cdot  \int_{(\P/C_\infty)^{[n-n_1]}}e(A_{\P/C_\infty}^{[n-n_1]}(M_\P)),$$ using the properties (i) and (ii) of the assignment $A$ mentioned in Section \ref{sec:main} \footnote{This is is the 2-dimensional analog of the more complicated degeneration formula for the  Hilbert schemes on threefolds studied in \cite[Section 6]{LW15} and \cite{MNOP06}. The 2-dimensional degeneration formula was also employed in \cite{T12, LT14}.}. Multiplying both sides of this formula by $q^n$ and summing over $n\ge 0$, we get
\begin{align*}&Z_A(S,M)=Z_A(S/C,M)\cdot Z_A(\P/C_\infty,M_\P),
\end{align*} from which the result follows immediately.
\end{proof}

Propositions  \ref{localization} and \ref{relsurf}  give the following result, which finishes the proof of Theorem \ref{main1}:
\begin{cor} \label{cor:formul}
$$Z_A(S/C,M)=\frac{Z_A(S,M)}{ Z_A(\P_{C,L,0} ,M^\infty_\P)}.$$
\end{cor}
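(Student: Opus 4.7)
The plan is to combine Propositions \ref{localization} and \ref{relsurf} directly; essentially the corollary is a two-step substitution. First, I would apply Proposition \ref{relsurf} to the pair $(S,C)$. Taking $L=\O_C(-C)$ in the notation \eqref{pcl} of Section \ref{Theory}, the projectivization $\P=\P(\O_C\oplus \O_C(-C))$ appearing in Proposition \ref{relsurf} is exactly $\P_{C,L}$, and Proposition \ref{relsurf} yields
$$Z_A(S/C,M)=\frac{Z_A(S,M)}{Z_A(\P_{C,L}/C_\infty,M_\P)}.$$
This already has the desired shape, with $Z_A(S,M)$ in the numerator; only the denominator needs to be rewritten.

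Second, I would apply Proposition \ref{localization} to the surface $\P_{C,L}$ with the line bundle $M_\P$ (in place of the $M$ appearing in that proposition). Since Proposition \ref{localization} is formulated for an arbitrary line bundle on $\P_{C,L}$, it applies verbatim to $M_\P$ and gives
$$Z_A(\P_{C,L}/C_\infty,M_\P)=Z_A(\P_{C,L,0},M_\P^{\infty}),$$
where $M_\P^\infty$ is the $\CC^*$-equivariant lift of $M_\P$ whose fibers over $C_\infty$ carry the trivial weight, and where the right hand side possesses a non-equivariant limit. Substituting this identity into the previous display yields the claim.

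There is no real obstacle here: the proof is a formal composition of the two key propositions. The only point that deserves a quick check is bookkeeping of the equivariant lift: Proposition \ref{localization} is proved by Atiyah--Bott localization on $\PCoo{n}$, which introduces the parameter $t$, and one must observe that after combining with Proposition \ref{relsurf} (whose own input is the non-equivariant series $Z_A(\P_{C,L}/C_\infty,M_\P)$) the equivariant limit is automatic because the left hand side of Proposition \ref{localization} is intrinsically non-equivariant. With this remark the substitution is legitimate and Corollary \ref{cor:formul} follows at once, completing the proof of Theorem \ref{main1} by identifying $\mathcal{N}=\P_{C,L,0}$ and $M^*=M_\P^\infty$.
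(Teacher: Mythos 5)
Your argument is correct and is exactly the paper's proof: the corollary is obtained by substituting Proposition \ref{localization} (applied to $\P_{C,L}$ with $L=\O_C(-C)$ and the line bundle $M_\P$) into the denominator produced by Proposition \ref{relsurf}. The remark about the equivariant bookkeeping is a sensible sanity check but adds nothing beyond what those two propositions already guarantee.
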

\hfill $\Box$

\section{Proof of Theorem \ref{mains}}\label{application}
In this case we specialize to the case
$$A_{S/C}^{[n]}=\T{n}_{S/C}:\Pic(S)\to K(\SC{n})$$ defined for any pair $(S,C)$ of a nonsingular suface and a smooth divisor by 
 \begin{equation*}\T{n}_{S/C}(M)=[Rp_{2*}p_1^*M]-[Rp_{2*}R\mathcal{H}om(\I,\I\otimes p_1^*M)],\end{equation*} where $p_1,p_2$ are projections to the first and second factors of $S\times \SC{n}$, and $\I$ denotes the universal ideal sheaf. 

\begin{lemma}\label{satis}
The assignment $\mathcal{T}$ satisfies properties (i), (ii), (iii) in Section \ref{sec:main}. 
\end{lemma}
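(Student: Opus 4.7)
The plan is to verify the three axioms in turn. Properties (i) and (iii) are essentially formal, while the substance of the lemma lies entirely in the degeneration property (ii).

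For (i), when $n=0$ the relative Hilbert scheme is $\operatorname{Spec}\CC$, the universal ideal sheaf is $\O_S$, and $R\mathcal{H}om(\O_S,\O_S\otimes p_1^*M) = p_1^*M$, so $\T{0}_{S/C}(M)$ is the zero virtual class; its Euler class is $1$, matching the required normalization. For (iii), if $S$ carries a $\CC^*$-action preserving $C$, then $\SC{n}$, the two projections from $S\times \SC{n}$, and the universal ideal sheaf $\I$ all inherit canonical $\CC^*$-equivariant structures. An equivariant lift $M^*$ of $M$ then makes $p_1^*M^*$ equivariant, so both $[Rp_{2*}p_1^*M^*]$ and $[Rp_{2*}R\mathcal{H}om(\I,\I\otimes p_1^*M^*)]$ lie in $K(\SC{n})^{\CC^*}$, and hence so does their difference.

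For (ii), the strategy is to construct the required degeneration directly from the Li--Wu family. Over the universal expanded family $\mathfrak{S}\to\C$ together with $\mathfrak{H}^{[n]}(\mathfrak{S}\to\C)$ and its universal ideal sheaf $\mathfrak{I}$, pull back a degeneration $\mathfrak{M}$ of $M$ (specializing to $M_i$ on $S_i$ and agreeing with $M|_C$ on $C$) and form the K-theory class
$$\mathcal{K} \;:=\; [Rp_{2*}p_1^*\mathfrak{M}] \;-\; [Rp_{2*}R\mathcal{H}om(\mathfrak{I},\mathfrak{I}\otimes p_1^*\mathfrak{M})].$$
This class restricts to $\T{n}_S(M)$ on generic fibers and defines a natural degeneration $\mathcal{K}|_{S^{[n]}_0}$ on the special fiber. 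The core task is to identify its restriction to the component $(S_1/C)^{[n_1]}\times(S_2/C)^{[n_2]}$ with $\T{n_1}_{S_1/C}(M_1)\boxplus \T{n_2}_{S_2/C}(M_2)$.

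This identification is a Mayer--Vietoris type computation. On the component in question, $\mathfrak{I}$ specializes to an ideal sheaf $I$ on $S_1\cup_C S_2$ whose restrictions $I_i$ are the corresponding universal relative ideal sheaves; the relative condition $I_i\otimes\O_C \cong \O_C$ gives a short exact gluing sequence
$$0 \longrightarrow I \longrightarrow j_{1*}I_1 \oplus j_{2*}I_2 \longrightarrow j_{C*}\O_C \longrightarrow 0,$$
and an analogous sequence holds for the specialization of $\mathfrak{M}$. Applying $R\mathcal{H}om(-,-\otimes \mathfrak{M})$ to the first, and combining with the K-theoretic decomposition induced by the second on $[Rp_{2*}p_1^*\mathfrak{M}]$, the contributions supported along $C$ appear with opposite signs and cancel. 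The main obstacle is precisely bookkeeping this cancellation: one must track the boundary maps in the two Mayer--Vietoris sequences and verify that the relative (injectivity) condition along $C$ is exactly what forces the $C$-localized pieces in the $R\mathcal{H}om$ term to match those in the line-bundle term. A cleaner phrasing is likely available by working throughout with a relative $R\mathcal{H}om$ adapted to the Li--Wu stack of expanded degenerations, in which the cancellations are built in and the $\boxplus$ decomposition becomes manifest.
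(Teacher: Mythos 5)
Your proposal is correct and follows essentially the same route as the paper: (i) and (iii) are formal, and (ii) is a Mayer--Vietoris argument in which the relativity condition identifies the $C$-supported terms of the two triangles with $M|_C$ so that they cancel in the K-theoretic difference. The only cosmetic difference is that the paper obtains both exact triangles at once by tensoring the structure-sheaf sequence $0\to \O_{S_0}\to \O_{S_1}\oplus\O_{S_2}\to \O_C\to 0$ with the already-formed complexes $R\mathcal{H}om(\I_0,\I_0\otimes p_1^*M_0)$ and $p_1^*M_0$, which sidesteps the bookkeeping of applying $R\mathcal{H}om(-,-\otimes M)$ in both arguments that you flag as the remaining obstacle.
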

\begin{proof}
Properties (i) and (iii) are clearly satisfied by $\mathcal{T}$. To verify property (ii), suppose we are given a good degeneration $S  \rightsquigarrow S_0:=S_1\cup_C S_2$ and the degeneration of the line bundle $M$ on $S$,  $$M\rightsquigarrow M_0 \quad \text{with} \quad M_i:=M_0|_{S_i}\in \text{Pic}(S_i),\quad M_C:=M|_C,$$ together with the decomposition  $n=n_1+n_2$. Let $S_0^{[n]}$ denote the special fiber of $\mathfrak{H}^{[n]}(\mathfrak{S}\to C)$ which can be written as \eqref{central}. Tensoring the exact sequence $$0\to \O_{S_0}\to \O_{S_1}\oplus \O_{S_2}\to \O_C\to 0$$ by the perfect complexes $R\mathcal{H}om(\I_0,\I_0\otimes p_1^*M_0)$ and $p_1^*M_0$ gives the exact triangles over $S_0\times S_0^{[n]}$:

$$R\mathcal{H}om(\I_0,\I_0\otimes p_1^*M_0)\to\oplus_{i=1}^2  R\mathcal{H}om(\I_i,\I_i\otimes p_1^*M_i)\to  p_1^*M_C,$$
$$ p_1^*M_0\to \oplus_{i=1}^2 p_1^*M_i\to p_1^*M_C,$$ where $\I_i$ is the universal ideal sheaf over $S_i \times S_i^{[-]}$ for $i=0,1,2$, $p_1$ is the projection to the first factor, and all the obvious push forwards by the inclusions of $C$, $S_1$, and $S_2$ into $S_0$ are dropped. The term  $p_1^*M_C$ in the first exact triangle above is because of the relativity condition on ideal sheaves.  
%stands for the projections to the first factors of the following products:
%$$S_0\times S_0^{[n]}, \quad S_i \times (S_i/C)^{n}, \quad C\times (S_0)^{[n]}.$$

Let $p_2$ be the projection to the second factor of $S_0\times S_0^{[n]}$. Applying $Rp_{2*}$ to the exact triangles above, and taking the difference of the $K$-group classes of the resulting complexes, we get the result.
\end{proof}

%The lemma shows that $\mathcal{T}$ satisfies the property (ii) in Section \ref{sec:main}. It is clear that $\mathcal{T}$ satisfies the properties (i) and (iii). 

We now use some of the notation introduced in Section \ref{Theory}. Consider the $\P^1$-bundle $\P_{C,L}$ defined in \eqref{pcl}. For $M \in \Pic(\P_{C,L})$, define $$m_f=M\cdot f, \quad m_0=M\cdot C_0, \quad l_0=\pi^* L\cdot C_0.$$

\begin{cor}\label{infty}
$Z_{\mathcal{T}}(\P_{C,L}/C_\infty,M)=\Xi(q)^{-\epsilon_{\P_{C,L}}(M)}$ where $$\epsilon_{\P_{C,L}}(M)=m_f^2 l_0+ (1+2m_f)m_0+(1+m_f)(2-2g). $$

\end{cor}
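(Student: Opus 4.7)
My plan is to specialize Proposition \ref{localization} to $A = \mathcal{T}$ (permitted by Lemma \ref{satis}), yielding
$$Z_{\mathcal{T}}(\P_{C,L}/C_\infty, M) = Z_{\mathcal{T}}(\P_{C,L,0}, M^\infty),$$
and then invoke the equivariant Carlsson--Okounkov formula (Remark \ref{quasi}) on the quasi-projective surface $\P_{C,L,0}$, the total space of $L^\vee \to C$, whose $\CC^*$-fixed locus $C_0 \cong C$ is complete. This gives
$$Z_{\mathcal{T}}(\P_{C,L,0}, M^\infty) = \Xi(q)^{-\delta_{\P_{C,L,0}}(M^\infty)},$$
with $\delta_{\P_{C,L,0}}(M^\infty) = \int_{\P_{C,L,0}} e(T\P_{C,L,0} \otimes M^\infty)$ interpreted as an equivariant residue. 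The proof is thus reduced to showing $\delta_{\P_{C,L,0}}(M^\infty) = \epsilon_{\P_{C,L}}(M)$.

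I would carry out this remaining equivariant integral by Atiyah--Bott localization at the fixed curve $C_0$. Using the splitting $T\P_{C,L}|_{C_0} = T_{C_0} \oplus N_{C_0/\P_{C,L}}$ with $N_{C_0/\P_{C,L}} \cong L^\vee$, localization produces
$$\delta_{\P_{C,L,0}}(M^\infty) = \int_{C_0} \frac{e\bigl(T_{C_0} \otimes M^\infty|_{C_0}\bigr) \cdot e\bigl(L^\vee \otimes M^\infty|_{C_0}\bigr)}{e(L^\vee)}.$$
The relevant equivariant weights are: $T_{C_0}$ has weight $0$, $L^\vee = N_{C_0/\P_{C,L}}$ has weight $+t$, and $M^\infty|_{C_0}$ has weight $+m_f t$. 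The last is the crucial input and is obtained by a one-dimensional localization on a single $\P^1$-fiber of $\pi$: the fiber tangent weights at the $C_0$- and $C_\infty$-points are $+t$ and $-t$, and since $M^\infty$ is normalized to be of weight $0$ on fibers over $C_\infty$, the balance condition $w_{C_0}/t + w_{C_\infty}/(-t) = m_f$ with $w_{C_\infty} = 0$ forces $w_{C_0} = +m_f t$.

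With $h = c_1(T_C)$, $m = c_1(M|_{C_0})$, and $l = c_1(L)$ regarded as classes on $C$ integrating to $2-2g$, $m_0$, and $l_0$, the equivariant Chern classes appearing in the integrand are
$$h + m + m_f t, \qquad -l + m + (1+m_f)\, t, \qquad -l + t.$$
The algebraic simplification is that $C$ is a curve, so any product of two classes from $\{h,m,l\}$ vanishes; in particular $l^2 = 0$, whence $(t-l)^{-1} = 1/t + l/t^2$. Expanding the numerator, discarding the terms that carry a product of two classes from $\{h,m,l\}$ (which vanish on $C$) and the terms of cohomological degree $0$ on $C$ (which vanish after integration), dividing by $t-l$, and integrating, I expect to obtain
$$\delta_{\P_{C,L,0}}(M^\infty) = (1+m_f)(2-2g) + (1+2m_f)\, m_0 + m_f^2\, l_0 = \epsilon_{\P_{C,L}}(M).$$
The principal obstacle is the precise bookkeeping of the $\CC^*$-weight on $M^\infty|_{C_0}$: the coefficients $1+m_f$ and $1+2m_f$ in $\epsilon_{\P_{C,L}}(M)$ are produced exactly by the weight $+m_f t$, and a sign error in this fiberwise computation would propagate and corrupt these two linear-in-$m_f$ coefficients.
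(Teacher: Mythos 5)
Your proposal is correct and follows essentially the same route as the paper: reduce to $Z_{\mathcal{T}}(\P_{C,L,0},M^\infty)$ via Proposition \ref{localization}, apply the equivariant Carlsson--Okounkov formula on the quasi-projective surface $\P_{C,L,0}=\mathrm{Tot}(L^\vee)$, and evaluate $\delta_{\P_{C,L,0}}(M^\infty)$ by localization at $C_0$ with the weight $m_f t$ on $M^\infty|_{C_0}$. Your expanded numerator and the resulting residue agree exactly with the paper's computation, so the bookkeeping you flagged as the main risk is in fact handled correctly.
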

\begin{proof}
Applying Proposition \ref{localization}, we get $$Z_{\mathcal{T}}(\P_{C,L}/C_\infty,M)=Z_{\mathcal{T}}(\P_{C,L,0},M^\infty).$$

The fibers of $M^\infty$ over $C_0$ carry the $\CC^*$-weight $m_ft$, so we can write
\begin{align*}& \delta_{\P_{C,L,0}}(M^\infty) =\\&\int_C \frac{m_f(m_f+1)t^2+\Big((m_f+1)(c_1(C)+c_1(M|_{C_0}))+m_f(-c_1(L)+c_1(M|_{C_0}))\Big)t}{t-c_1(L)}\\
&=m_f^2 L\cdot C_0+ (1+2m_f)M\cdot C_0+(1+m_f)e(C).\end{align*} This finishes the proof.

\end{proof}

 \begin{cor}

Define \begin{align*} R(\P_{C,L},M)=1-\sum_{n>0 }q^{n}\int_{\PR{n}} \frac{c(\mathcal{T}^{[n],\sim}_{S/C}(M))}{1-m_f \Psi_0}.   \end{align*}
Then, 
$$R(\P_{C,L},M)=\Xi(q)^{-\widetilde{\epsilon}_{\P_{C,L}}(M)}$$
where $$\widetilde{\epsilon}_{\P_{C,L}}(M)=m_f^2 l_0+ 2m_f m_0+m_f(2-2g). $$

\end{cor}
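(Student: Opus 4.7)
My plan is to identify $R(\P_{C,L},M)$ with the rubber generating series $Z_{\mathcal{T}}(\P_{C,L},M^0)^\sim$ defined in \eqref{rubber}, and then to evaluate the latter using Corollary~\ref{cor:rubber}.

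To evaluate $Z_{\mathcal{T}}(\P_{C,L},M^0)^\sim$, I would apply Corollary~\ref{cor:rubber} with $A=\mathcal{T}$, which gives
$$Z_{\mathcal{T}}(\P_{C,L},M^0)^\sim=\frac{Z_{\mathcal{T}}(\P_{C,L,0},M^\infty)}{Z_{\mathcal{T}}(\P_{C,L,0},M^0)}.$$
The numerator equals $\Xi(q)^{-\epsilon_{\P_{C,L}}(M)}$ by Corollary~\ref{infty}. For the denominator, I would evaluate $\delta_{\P_{C,L,0}}(M^0)=\int_{\P_{C,L,0}}e(T_{\P_{C,L,0}}\otimes M^0)$ by equivariant localization at the unique fixed locus $C_0$: since $M^0$ has trivial $\CC^*$-weight at $C_0$ by construction, the localization integrand has only a linear-in-$t$ numerator over $C$, and direct computation yields $\delta_{\P_{C,L,0}}(M^0)=(2-2g)+m_0$. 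Taking the ratio then gives $Z_{\mathcal{T}}(\P_{C,L},M^0)^\sim=\Xi(q)^{-\widetilde{\epsilon}_{\P_{C,L}}(M)}$ after the algebraic simplification $\epsilon_{\P_{C,L}}(M)-((2-2g)+m_0)=m_f^2 l_0+2m_f m_0+m_f(2-2g)$.

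For the identification $R(\P_{C,L},M)=Z_{\mathcal{T}}(\P_{C,L},M^0)^\sim$, I would first observe that the integral $\int_{\PR{n}} e(\mathcal{T}^{[n],\sim}(M^0))/(-t-\Psi_0)$ is $t$-independent by dimensional considerations on $\PR{n}$, and then evaluate it at the convenient specialization $t=-1/m_f$. This specialization converts $(-t-\Psi_0)^{-1}$ into $m_f/(1-m_f\Psi_0)$, and the key step is to establish the Chern-class identity
$$e(\mathcal{T}^{[n],\sim}(M^0))|_{t=-1/m_f}=-\frac{c(\mathcal{T}^{[n],\sim}(M))}{m_f}.$$
This identity in turn follows from the explicit Laurent expansion $e(\mathcal{T}^{[n],\sim}(M^0))=\sum_{k\ge 1}a_k t^k$ with $a_k=(-1)^{k+1}m_f^{k-1}c_{2n-k}(\mathcal{T}^{[n],\sim}(M))$; establishing this expansion requires careful tracking of the $\CC^*$-weights of $M^0$ (trivial at $C_0$, equal to $-m_f t$ at $C_\infty$) through the derived pushforwards and Ext sheaves in the formula $\mathcal{T}^{[n],\sim}(M^0)=[Rp_{2*}p_1^*M^0]-[Rp_{2*}R\mathcal{H}om(\mathcal{I},\mathcal{I}\otimes p_1^*M^0)]$, and is the main technical obstacle. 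Once it is in place, the identity above immediately yields
$$\int_{\PR{n}}\frac{e(\mathcal{T}^{[n],\sim}(M^0))}{-t-\Psi_0}=-\int_{\PR{n}}\frac{c(\mathcal{T}^{[n],\sim}(M))}{1-m_f\Psi_0},$$
so $Z_{\mathcal{T}}(\P_{C,L},M^0)^\sim=R(\P_{C,L},M)=\Xi(q)^{-\widetilde{\epsilon}_{\P_{C,L}}(M)}$.
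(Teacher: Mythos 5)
Your overall route is exactly the paper's: identify $R(\P_{C,L},M)$ with the rubber series $Z_{\mathcal{T}}(\P_{C,L},M^0)^\sim$ of \eqref{rubber}, then evaluate the latter via Corollary~\ref{cor:rubber}, Corollary~\ref{infty}, and a localization computation of $\delta_{\P_{C,L,0}}(M^0)$. Your value $\delta_{\P_{C,L,0}}(M^0)=(2-2g)+m_0$ and the closing algebra $\epsilon_{\P_{C,L}}(M)-((2-2g)+m_0)=\widetilde{\epsilon}_{\P_{C,L}}(M)$ are correct; indeed this is the value forced by the claimed exponent, whereas the exponent displayed at the corresponding step of the paper's proof appears to contain a typo. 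Your observation that the rubber integral is $t$-independent, so that one may specialize $t=-1/m_f$, is also sound.

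The gap is in the identification $R=Z_{\mathcal{T}}(\P_{C,L},M^0)^\sim$, which you reduce to the identity $e(\mathcal{T}^{[n],\sim}(M^0))|_{t=-1/m_f}=-c(\mathcal{T}^{[n],\sim}(M))/m_f$ and then explicitly defer as ``the main technical obstacle.'' This identity is not only unproven, it cannot hold in the form you state. The only equivariant input is that $M^0$ differs from the lift with trivial weight by the character of weight $-m_ft$, so on the rubber $\mathcal{T}^{[n],\sim}(M^0)=\mathcal{T}^{[n],\sim}(M)\otimes\CC_{-m_ft}$, a rank-$2n$ virtual class twisted by a character; hence $e(\mathcal{T}^{[n],\sim}(M^0))=\sum_k c_k\,(-m_ft)^{2n-k}$ with $c_k=c_k(\mathcal{T}^{[n],\sim}(M))$, and its value at $t=-1/m_f$ is $\sum_k c_k=c(\mathcal{T}^{[n],\sim}(M))$, not $-c/m_f$. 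Equivalently, your Laurent expansion $a_k=(-1)^{k+1}m_f^{k-1}c_{2n-k}$ is dimensionally inconsistent: the leading coefficient of the Euler class of a rank-$2n$ class twisted by weight $-m_ft$ must be $a_{2n}=m_f^{2n}$, while your formula gives $-m_f^{2n-1}$. Feeding the correct Euler class through your own specialization argument yields $Z_{\mathcal{T}}(\P_{C,L},M^0)^\sim=1+\sum_{n>0}q^n\int_{\PR{n}} m_f\,c(\mathcal{T}^{[n],\sim}(M))/(1-m_f\Psi_0)$, which differs from the stated $R$ by a factor of $-m_f$ in every $q$-coefficient. So either the normalization in the definition of $R$ has to be adjusted to match the rubber series, or a genuinely different analysis of the descent class is required; as written, the crucial step of your argument does not go through.
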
 

\begin{proof}
By a simple calculation using \eqref{rubber}, $$R(\P_{C,L},M)=Z_{\mathcal{T}}(\P_{C,L},M^0)^\sim,$$ because the fibers of $M^0$ over $C_\infty$ carry the $\CC^*$-weight $-m_ft$. By Carlsson-Okounkov's formula \eqref{eqco} and Remark \ref{quasi}, $$Z(\P_{C,L,0},M^0)=\Xi(q)^{-c_1(M)|_{C_0}-c_1(L)}.$$ Combining these formulas with Corollary \ref{cor:rubber} and the formula for $Z(\P_{C,L,0},M^\infty)$ in the proof of Corollary \ref{infty} , we obtain the result.

\end{proof}

\begin{cor} \label{relative}
Suppose that $D$ is a divisor on $C$ of degree $d$. Then, 
$$Z_{\mathcal{T}}(\P_{C,L}/C_\infty,\pi^*D)=\Xi(q)^{-e(C)-d}.$$
\end{cor}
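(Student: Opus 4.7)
The plan is to apply Corollary \ref{infty} directly with $M=\pi^*D$ and then compute the three intersection-theoretic inputs $m_f$, $m_0$, and $l_0$ in terms of the divisor $D$ on $C$. The formula $\epsilon_{\P_{C,L}}(M)=m_f^2 l_0+(1+2m_f)m_0+(1+m_f)(2-2g)$ should collapse dramatically when $M$ is a pullback from $C$, and the task is just to see that what remains is exactly $e(C)+d$.

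First I would compute $m_f=\pi^*D\cdot f$. Since $f=[\pi^{-1}(p)]$ is a fiber class, the projection formula gives $\pi^*D\cdot f=D\cdot\pi_*f=0$, so $m_f=0$. Next, for $m_0=\pi^*D\cdot C_0$, the projection $\pi$ restricts to an isomorphism $C_0\xrightarrow{\sim}C$, so by the projection formula $\pi_*(\pi^*D\cdot C_0)=D\cdot\pi_*C_0=D$, which has degree $d$; hence $m_0=d$. (The value of $l_0$ is not needed, since it is multiplied by $m_f^2=0$.) Finally, $(1+m_f)(2-2g)=2-2g=e(C)$ since $C$ has genus $g$.

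Plugging these into the formula of Corollary \ref{infty} yields
$$\epsilon_{\P_{C,L}}(\pi^*D)=0\cdot l_0+(1+0)\cdot d+(1+0)\cdot e(C)=e(C)+d,$$
and therefore $Z_{\mathcal{T}}(\P_{C,L}/C_\infty,\pi^*D)=\Xi(q)^{-e(C)-d}$, as required.

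There is no real obstacle here; the statement is a direct specialization of Corollary \ref{infty}. The only thing to be careful about is the sign convention for the $\CC^*$-weight on $M^\infty$, but since $m_f=0$ the lift is canonically trivial and the computation of $\epsilon_{\P_{C,L}}$ used in the proof of Corollary \ref{infty} applies without further comment.
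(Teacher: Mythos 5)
Your proof is correct and follows essentially the same route as the paper: the paper's own proof likewise notes $m_f=\pi^*D\cdot f=0$ and $\pi^*D\cdot C_0=d$ and then invokes Proposition \ref{localization} together with Carlsson--Okounkov's formula, which is exactly the content of Corollary \ref{infty} that you specialize. Your explicit verification of $m_0=d$ via the isomorphism $C_0\cong C$ and the observation that $l_0$ is irrelevant because it is multiplied by $m_f^2$ are both accurate.
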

\begin{proof}
In this case $m_f=\pi^*(D)\cdot f=0$, and $M\cdot C_0=d$. Now use the result of Proposition \ref{localization} and  Carlsson-Okounkov's formula \eqref{eqco}.
\end{proof}

\begin{proof}[Proof of Theorem \ref{mains}]
We prove the theorem for $k=1$ and $C_1=C$, the generalization to higher $k$ is straightforward.  By Carlsson-Okounkov's formula \eqref{eqco} and Corollaries \ref{cor:formul} and 
\ref{relative}, we have 
$$Z_{\mathcal{T}}(S/C,M)=\Xi(q)^{-\eta_{(S,C)}(M)}, \quad \eta_{(S,C)}(M)=e(S)-K_S\cdot M+M^2-e(C)-M\cdot C.$$

The result now easily follows from the fact that 
\begin{align*} 
c_1(T_S[-C])&=c_1(S)-C,  \\c_2(T_S[-C])&=c_2(S)-c_1(S)\cdot C+C^2=c_2(S)-e(C).
\end{align*}

\end{proof}

\begin{remark}
Behrend in \cite{B09} defines the $\mathbb{Q}$-valued function $\chi(X)$ for any DM stack $X$ over $\CC$, which specializes to the Euler characteristic with compact support in the case that $X$ is a scheme. By Gauss-Bonnet theorem \cite[Proposition 1.6]{B09} and Theorem \ref{mains} for a projective surface $S$ we have 
$$1+\sum_{n>0} \chi(\SC{n})q^n=Z_{\mathcal{T}}(S/C,\O).$$ Note that $\chi(\SC{n})$ is different from the alternating sum of the dimensions of the cohomology groups obtained by taking the $\lim_{t\to -1}$ of the formula in \cite[Theorem 1]{S10} for the Poincare polynomial of $\SC{n}$ (see \cite[Section 1.3]{B09}).
\end{remark}

\section{Polynomiality of the top intersection numbers}\label{other}
For any $M \in \Pic(S)$, the rank $n$ tautological bundle is defined by $$M^{[n]}:=p_{2*}p_1^*M\otimes \O_{S\times \SC{n}}/\I,$$ where $p_1,p_2$ are projections to the first and second factors of $S\times \SC{n}$, and $\I$ denotes the universal ideal sheaf. In this section we study two other applications of Theorem \ref{main1}.

For any pair $(S,C)$ of a nonsingular surface and a smooth divisor, and any $n=1,2,\dots$, we define $$\mathcal{L}_{S/C}^{[n]},\; \mathcal{S}_{S/C}^{[n]} :\Pic(S)\to K(\SC{n})$$  $$\mathcal{L}_{S/C}^{[n]}(M)=M^{[n]}\oplus M^{[n]}, \quad \mathcal{S}_{S/C}^{[n]}(M)=-M^{[n]}.$$ As in Lemma \ref{satis}, one can see that the assignments $\mathcal{S}$ and $\mathcal{L}$ satisfy properties (i), (ii), (iii) in Section \ref{sec:main}. For the property (ii), the key point is that by the relativity condition on the ideal sheaves we have $\O_{S\times \SC{n}}/\I \otimes \O_{C\times \SC{n}}=0$.

If $D\subset S$ is an effective divisor, then the $q$-coefficients of the series $$Z_{\mathcal{L}}(S,\O(D))$$ can be interpreted as the self-intersection number of the $n$-dimensional cycle $D^{[n]}\subset \S{n}$, by noting that $e(\mathcal{L}_{S}^{[n]}(\O(D)))=e(\O(D)^{[n]})^2$.

Suppose that the ample divisor $H$ on $S$ is so that the linear system $|H| $ is $3n - 2$-dimensional. Then the $q$-coefficients of the series $Z_{\mathcal{S}}(S,\O(H))$ are the top Segre classes of $\O(H)^{[n]}$, and they can be interpreted as the number of points in $\S{n}$ which do not impose independent conditions on curves in the linear system. They were considered by Donaldson in connection with the computation of the instanton invariants. These top Segre classes were studied in \cite[Section 4.3]{L99} and by other people. They were explicitly computed in \cite{L99} up to $n=7$, and a general formula for their generating function was conjectured, based on these calculations. 

It is known that the $q$-coefficients of the series $$Z_{\mathcal{L}}(S,M),\quad Z_{\mathcal{S}}(S,M)$$ are polynomials in the intrinsic invariants of $S$ such as $M^2$, $M\cdot K_S$, $K_S^2$ and $e(S)$ \cite{EGL99}.  Using the notation of Corollary \ref{cor:formul}, $$Z_{\mathcal{S}}(\mathcal{N} ,M_\P^\infty), \quad Z_{\mathcal{L}}(\mathcal{N} ,M_\P^\infty)$$ can be expressed in terms of the same intersection numbers once they are computed $\CC^*$-equivariantly: $$(M^\infty_\P)^2=0, \quad K^2_\mathcal{N}=C^2+4-4g, \quad K_\mathcal{N}\cdot M^\infty_\P=-M\cdot C, \quad e(\mathcal{N})=e(C)=2-2g,$$ %$$L_i\cdot L_j=0, \quad L_i\cdot K_N=-L_i\cdot C.$$ 
For example, using Lehn's calculation,  \begin{align*}\int_{\mathcal{N}^{[3]}}e(-M^{[3]})&=-192M\cdot C+56C^2+72(2-2g).\end{align*}

By Theorem \ref{main1}, we have proven that 
\begin{theorem}\label{thm:intrinsic}
The $q$-coefficients of the generating series $$Z_{\mathcal{L}}(S/C,M) \quad \text{ and } \quad Z_{\mathcal{S}}(S/C,M)$$ are polynomials in $$M^2,\quad  M\cdot K_S,\quad K_S^2, \quad e(S), \quad C^2, \quad M\cdot C, \quad e(C).$$
%The $q$-coefficients of the generating series $Z_{\mathcal{L}}(N ,L_{1}\oplus L_{2})$ are polynomials in $$L_i\cdot L_j ,\quad  L_i\cdot K_S,\quad K_S^2, \quad e(S), \quad C^2, \quad L_i\cdot C, \quad e(C).$$
\end{theorem}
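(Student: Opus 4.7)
The plan is to combine Corollary \ref{cor:formul} with the universality theorem of Ellingsrud-G\"ottsche-Lehn \cite{EGL99}, applied both in the classical form (for the numerator) and in an equivariant form (for the denominator).

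\textbf{Step 1: Reduction via Corollary \ref{cor:formul}.} Since the assignments $\mathcal{L}$ and $\mathcal{S}$ satisfy properties (i), (ii), (iii) from Section \ref{sec:main} (as noted in the preceding discussion), Corollary \ref{cor:formul} yields
\[
Z_A(S/C,M) = \frac{Z_A(S,M)}{Z_A(\mathcal{N}, M_\P^\infty)}, \qquad A \in \{\mathcal{L}, \mathcal{S}\}.
\]
Both series have constant term $1$, so any $q$-coefficient of the quotient is a polynomial (in fact, a $\Z$-polynomial in divided differences) in the $q$-coefficients of numerator and denominator; in particular, if the latter are polynomial in certain invariants, so is the quotient.

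\textbf{Step 2: Numerator.} The $q^n$-coefficient of $Z_A(S,M)$ is a top intersection product of Chern classes of tautological bundles $M^{[n]}$ on the projective $\S{n}$. By \cite{EGL99}, this integral equals a universal polynomial $P_n^A$ in $(M^2, M\cdot K_S, K_S^2, e(S))$ that depends only on $n$ and $A$.

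\textbf{Step 3: Denominator.} Here $\mathcal{N}$ is quasi-projective and the integrals are defined via $\CC^*$-equivariant residues. I would invoke the equivariant extension of \cite{EGL99}: the universality proof is a purely formal manipulation of Chern classes on $\S{n}$ (expressible via Nakajima operators on the Fock space built from $H^*(S)$) that only uses the intersection form and Chern data of $(S,M)$. Consequently, the same universal polynomials $P_n^A$ evaluate the equivariant integral on $\mathcal{N}^{[n]}$ at the equivariant Chern numbers of $(\mathcal{N}, M_\P^\infty)$. By Theorem \ref{main1}, the resulting rational function of $t$ admits a non-equivariant limit, and this limit is obtained by substituting the classical Chern numbers recorded in the excerpt:
\[
(M_\P^\infty)^2 = 0, \quad M_\P^\infty \cdot K_\mathcal{N} = -M\cdot C, \quad K_\mathcal{N}^2 = C^2 + 4 - 4g, \quad e(\mathcal{N}) = 2 - 2g = e(C).
\]
Each of these is polynomial in $(C^2, M\cdot C, e(C))$, so the $q$-coefficients of $Z_A(\mathcal{N}, M_\P^\infty)$ are polynomial in these three invariants.

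\textbf{Main obstacle.} The key point to justify is the equivariant extension of \cite{EGL99} used in Step 3, since EGL is stated for projective surfaces. To bypass this, an alternative route is to use Proposition \ref{localization} to rewrite $Z_A(\mathcal{N}, M_\P^\infty) = Z_A(\P_{C,L}/C_\infty, M_\P)$ with $L = \O_C(-C)$; one applies \cite{EGL99} to the \emph{projective} surface $\P_{C,L}$ and recovers the non-relative integrals on $\mathcal{N}^{[n]}$ via $\CC^*$-localization by subtracting the fixed-point contributions supported along $C_\infty$, whose equivariant weights are again polynomial in $(C^2, M\cdot C, e(C))$. Either route yields the stated polynomiality, and the sample calculation $\int_{\mathcal{N}^{[3]}} e(-M^{[3]}) = -192\, M\cdot C + 56\, C^2 + 72\, e(C)$ quoted just above the theorem illustrates the expected polynomial form.
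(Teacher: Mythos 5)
Your argument is essentially the paper's own proof: combine Corollary \ref{cor:formul} (equivalently Theorem \ref{main1}) with the universality theorem of \cite{EGL99} for the numerator and with its $\CC^*$-equivariant form for the denominator, where the equivariant Chern numbers of $(\mathcal{N},M_\P^\infty)$ reduce to the listed expressions in $C^2$, $M\cdot C$, $e(C)$. The one point you flag as an obstacle --- the equivariant extension of \cite{EGL99} to the quasi-projective $\mathcal{N}$ --- is precisely the step the paper asserts without further justification (``once they are computed $\CC^*$-equivariantly''), so your added discussion and alternative route via Proposition \ref{localization} are a refinement rather than a divergence.
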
 \qed

\section{Donaldson-Thomas invariants of $\P^3$} \label{DT}
Let $S_d$ be a nonsingular hypersurface in $\P^3$ of degree $d$. We allow $d=0$ in which case  $S_0$ is empty. Suppose that $M=M(\P^3/S_d,n)$ is the moduli space of stable torsion sheaves on $\P^3$ relative to $S_d$, with Chern characters $$(0,H,-H^2/2,(1/6-n)H^3)\in \oplus_{i=0}^3 H^{2i}(\P^3,\Q),$$ where $H=c_1(\O(1))$.  A general point of the moduli space corresponds to the isomorphism class of a pure 2-dimensional sheaf $\F$ supported on a hyperplane $T \subset \P^3$ which is transversal to $S_d$. By the choice of the Chern character above, $\F$ is isomorphic to the push forward of the ideal sheaf of a length $n$ 0-dimensional subscheme of $T$. 
In an upcoming work, we show that $M$ is complete of virtual dimension $3$ and there is a natural support morphism $$\pi:M\to |H|=\P^3.$$ The Donaldson-Thomas invariant of $M$ can be defined by $$DT(\P^3/S_d,n)\cdot [\P^3]=\pi_*[M]^{vir}.$$ 

We then prove that 
\begin{theorem*}(Gholampour-Sheshmani)
$$DT(\P^3/S_d,n)=\int_{(\P^2/C_d)^{[n]}}e(\T{n}_{\P^2/C_d}(H))$$ where $C_d$ is a generic hyperplane section of $S_d$. The generating series of DT invariants is explicitly given by Theorem \ref{mains} and hence is modular. 

\end{theorem*}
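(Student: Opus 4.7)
The plan is to reduce the computation of $DT(\P^3/S_d,n)$ to an integral over the generic fiber of the support morphism $\pi:M\to|H|=\P^3$, and then to identify that fiber with the relative Hilbert scheme $(\P^2/C_d)^{[n]}$ and the resulting excess obstruction with the twisted tangent bundle $\T{n}_{\P^2/C_d}(H)$. Since $[M]^{vir}$ has dimension $3=\dim|H|$ and $\pi$ is proper, $\pi_*[M]^{vir}=DT(\P^3/S_d,n)\cdot[\P^3]$ and the coefficient can be read off as the degree of the virtual class restricted to a single generic fiber.

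For a hyperplane $T\in|H|$ meeting $S_d$ transversally, the curve $C_d=T\cap S_d\subset T\cong\P^2$ is smooth of degree $d$, and the sheaves parameterized by $\pi^{-1}([T])$ are precisely the push-forwards $\iota_*I_Z$ along $\iota:T\hookrightarrow\P^3$, where $I_Z$ is a length-$n$ stable relative ideal sheaf on $(T,C_d)$ in the sense of Li-Wu. This identifies $\pi^{-1}([T])$ scheme-theoretically with $(\P^2/C_d)^{[n]}$, which has actual dimension $2n$ whereas the relative virtual dimension of $\pi$ is $3-3=0$; hence an excess obstruction bundle of rank $2n$ appears.

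To identify this excess, I would work out the perfect obstruction theory of $M$ fiberwise. For $\iota$ with normal bundle $N=\O_T(1)$, Grothendieck duality gives a natural splitting
$$R\mathrm{Hom}_{\P^3}(\iota_*I_Z,\iota_*I_Z)\simeq R\mathrm{Hom}_T(I_Z,I_Z)\oplus R\mathrm{Hom}_T(I_Z,I_Z\otimes N)[-1].$$
The first summand controls the deformations of $Z$ with $T$ fixed (yielding the relative Hilbert scheme once relativity to $C_d$ is imposed), while the second summand, together with the trace contribution accounting for the cotangent directions to $|H|$ at $[T]$, globalizes over the universal family to exactly the virtual bundle $\T{n}_{\P^2/C_d}(H)$ from Section \ref{application}. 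Two key ingredients are the vanishing $\operatorname{Ext}^2_T(I_Z,I_Z\otimes\O_T(1))=0$ on $\P^2$ (by Serre duality, since $K_{\P^2}\otimes\O(-1)=\O(-4)$ has no sections), and the matching of the relativity condition on $\P^3$ (with respect to $S_d$) with the relativity condition on $T$ (with respect to $C_d=T\cap S_d$). Combining this with the virtual projection formula for $\pi$ yields the stated formula for $DT(\P^3/S_d,n)$, and Theorem \ref{mains} then gives the generating series as $\Xi(q)^{-k}$ with $k=\int_{\P^2}e(T_{\P^2}[-C_d]\otimes\O(1))$; since $\Xi(q)^{-k}=q^{k/24}\eta(\tau)^{-k}$ (with $q=e^{2\pi i\tau}$ and $\eta(\tau)$ the Dedekind eta function), this is a modular form of weight $-k/2$ for $\mathrm{SL}(2,\Z)$ with an appropriate multiplier system.

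The main technical obstacle is the global construction: showing that $\pi$ carries a relative perfect obstruction theory whose restriction to the generic fiber is, as a $K$-theory class, equal to $\T{n}_{\P^2/C_d}(H)$, and that the virtual class $[M]^{vir}$ on the relative moduli space $M(\P^3/S_d,n)$ pushes forward correctly. The careful matching of the two relativity conditions, and the proper treatment of the ``bad'' fibers of $\pi$ over hyperplanes that fail to meet $S_d$ transversally, are the most delicate points; these will be addressed in our upcoming work.
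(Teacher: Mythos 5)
The paper does not actually prove this statement: it is announced as a theorem whose proof (including the construction of $M$, its virtual class, and the support morphism $\pi$) is deferred to ``an upcoming work.'' So there is no proof in the paper to compare yours against. That said, your plan follows exactly the strategy the paper's setup points to, and the key identifications look right: the degree of $\pi_*[M]^{vir}$ is read off on a generic fiber; for a hyperplane $T$ transverse to $S_d$ the fiber is the Li--Wu relative Hilbert scheme $(\P^2/C_d)^{[n]}$; and the Grothendieck-duality splitting $R\mathrm{Hom}_{\P^3}(\iota_*I_Z,\iota_*I_Z)\simeq R\mathrm{Hom}_T(I_Z,I_Z)\oplus R\mathrm{Hom}_T(I_Z,I_Z\otimes N)[-1]$ for a divisor $\iota:T\hookrightarrow\P^3$ with $N=\O_T(1)$ is precisely the mechanism used in the authors' earlier work \cite{GST14} to convert sheaf-counting on a threefold into an Euler-class integral of a rank-$2n$ twisted tangent bundle on a Hilbert scheme of points; your rank count ($2n$ excess over relative virtual dimension $0$) and the Serre-duality vanishing $\mathrm{Ext}^2_{\P^2}(I_Z,I_Z\otimes\O(1))=0$ are both correct, as is the translation of $\Xi(q)^{-k}$ into $q^{k/24}\eta(\tau)^{-k}$.

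What remains is genuinely open and you have named it honestly: (a) the global statement that $\pi$ carries a relative perfect obstruction theory whose $K$-theory class on the generic fiber is $\T{n}_{\P^2/C_d}(H)$, not just a fiberwise $\mathrm{Ext}$ computation at a point; (b) the contribution of the locus of hyperplanes not transverse to $S_d$ and of non-generic points of $M$ (sheaves on expanded degenerations, or with support meeting $S_d$ badly), which must be shown not to spoil the equality $\deg\bigl(\pi_*[M]^{vir}\bigr)=\int_{(\P^2/C_d)^{[n]}}e(\T{n}_{\P^2/C_d}(H))$; and (c) the precise matching of the relativity condition on $\P^3$ along $S_d$ with the relativity condition on $T$ along $C_d$ at the level of moduli, not just of closed points. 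These are exactly the points the paper itself defers, so your proposal should be regarded as a correct plan rather than a proof; it cannot be faulted against the paper, which supplies nothing more.
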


\noindent {\tt{amingh@math.umd.edu}} \\
\noindent {\tt{University of Maryland}} \\
\noindent {\tt{College Park, MD 20742-4015, USA}} \\

\noindent {\tt{sheshmani.1@math.osu.edu}} \\
\noindent {\tt{Ohio State University}} \\
\noindent {\tt{600 Math tower, 231 West 18th avenue, Columbus, Ohio, 43210, USA}} \\

\end{document}